\numberwithin{equation}{section}
\newtheorem{thm}{Theorem}[section]
\newcommand{\bth}{\begin{thm}}
\newtheorem{bdef}{Definition}
\newcommand{\brdef}{\begin{bdef}}
\newcommand{\erdef}{\end{bdef}}
\newtheorem{lem}{Lemma}[section]
\newcommand{\blem}{\begin{lem}}
\newcommand{\elem}{\end{lem}}
\newtheorem{alg}{Algorithm}[section]
\newcommand{\balg}{\begin{alg}}
\newcommand{\ealg}{\end{alg}}
\newtheorem{rem}{Remark}
\newcommand{\brem}{\begin{rem}}
\newcommand{\erem}{\end{rem}}
\newtheorem{exa}{Example}
\newcommand{\bexa}{\begin{exa}}
\newcommand{\eexa}{\end{exa}}
\newtheorem{corr}{Corollary}[section]
\newcommand{\bcor}{\begin{corr}}
\newcommand{\ecor}{\end{corr}}
\newcommand{\bproof}{\begin{proof}}
\newcommand{\eproof}{\end{proof}}
\def\({\left(}
\def\){\right)}
\def\vec{\mathop{\mathrm{vec}}}
\def\trace{\mathop{\mathrm{trace}}}
\def\Im{\mathop{\mathrm{Im}}}
\def\diag{\mathop{\mathrm{diag}}}
\def\max{\mathop{\mathrm{max}}}
\begin{document}

\title {\textrm{\textbf{On the conditioning of the matrix-matrix exponentiation}}}

\author{\normalsize{\textbf{Jo\~{a}o R. Cardoso$^{a}$\footnote{E-mail address of Jo\~ao R. Cardoso: jocar@isec.pt}, Amir Sadeghi $^{b}$\footnote{Corresponding author (E-mail address: drsadeghi.iau@gmail.com)} ,}} \\
\small{\textit{$^{a}$ Polytechnic Institute of Coimbra/ISEC, Coimbra -- Portugal, and}}\\
\small{\textit{Institute of Systems and Robotics,
 University of Coimbra, P\'{o}lo II, Coimbra -- Portugal}}\\
\small{\textit{$^{b}$Department of Mathematics, Robat Karim Branch, Islamic Azad University, Tehran, Iran.    }}
}

\date{}
\maketitle
\vspace {-.5cm}

\maketitle
\thispagestyle{empty}

\begin{abstract}

If ${A}$ has no eigenvalues on the closed negative real axis, and $B$ is arbitrary square complex, the matrix-matrix exponentiation is defined as $A^B:=e^{\log({A}){B}}$. This function arises, for instance, in Von Newmann's quantum-mechanical entropy, which in turn finds applications in other areas of science and engineering. Since in general $A$ and $B$ do not commute, this bivariate matrix function may not be a primary matrix function as commonly defined, which raises many challenging issues. In this paper, we revisit this function and derive new related results. Particular emphasis is given to its Fr\'echet derivative and conditioning. We present a general result on the Fr\'echet derivative of bivariate matrix functions with applications not only to the matrix-matrix exponentiation but also to other functions, such as the second order Fr\'echet derivatives and some iteration functions arising in matrix iterative methods. The numerical computation of the Fr\'echet derivative is discussed and an algorithm for computing the relative condition number of $A^B$ is proposed.  Some numerical experiments are included.

\end{abstract}

\textit{keywords}: Matrix-matrix exponentiation, Conditioning, Fr\'echet Derivative, Matrix exponential, Matrix logarithm

\section{Introduction}

Let $A$ be an $n\times n$ square complex matrix with no eigenvalues on the closed negative real axis $\mathbb{R}_0^-$ and let $B$ be an arbitrary square complex matrix of order $n$. The matrix-matrix exponentiation $A^B$ is defined as
\begin{equation}\label{AB}
A^B:=e^{\log({A}){B}},
\end{equation}
where $e^X$ stands for the exponential of the matrix $X$ and $\log(A)$ denotes the principal logarithm of $A$, i.e., the unique solution of the matrix equation $e^X=A$ whose eigenvalues lie on the open strip $\{ z\in \mathbb{C} :-\pi <\Im z<\pi\}$ of the complex plane; $\Im z$ stands for the imaginary part of $z$.

For background on matrix exponential, matrix logarithm and general matrix functions see \cite{Higham, Horn94} and the references therein. Note that although $A^B$ includes well-known matrix functions as particular cases (for instance, the matrix inverse and real powers of a matrix; see Lemma \ref{lema-basic} below), it is not, in general, a primary matrix function as defined in those books. Indeed, there may not exist a scalar single variable stem function associated to the matrix-matrix exponentiation. However, we can view $A^B$ as being an extension of the two variable function $x^y=e^{x\log y}$, but the lack of commutativity between $A$ and $B$ turns the extension of this function to matrices quite cumbersome. An interesting attempt to define the concept of bivariate matrix function as an operator is given in the monograph \cite{Kressner}. Although we refer to the matrix-matrix exponentiation as being a bivariate matrix function, it does not belong to the class of bivariate matrix functions defined in \cite{Kressner}. Here, $A^B$ can be regarded as a function from $\mathbb{C}^{n\times n}\times \mathbb{C}^{n\times n}$ to $\mathbb{C}^{n\times n}$ which assigns to each pair of matrices $(A,B)$ the $n\times n$ square complex matrix $A^B$.
Another way of defining the concept of matrix-matrix exponentiation would be
$$
^B\hspace*{-0.8mm}A:=e^{B\log({A})}.
$$
In Section \ref{basic}, some relationships between $A^B$ and $^B\hspace*{-0.8mm}A$ are pointed out (see, in particular, (iii) in Lemma \ref{lemma2}). However, our attention will be mainly focused on $A^B$. Analogue results follow straightforward for $^B\hspace*{-0.8mm}A$. A definition of the matrix-matrix exponentiation in a componentwise fashion is also possible, as used in \cite{Gentleman} to deal with some problems is Statistics. However, this latter definition is not considered in this work.

One of our goals is to investigate the sensitivity of the function $A^B$ to perturbations of first order in $A$ and $B$. A widely used tool to carry out this is the Fr\'echet derivative, which in turn allows the computation of the condition number of the function. In this work, we derive a general result on the Fr\'echet derivative of certain bivariate matrix functions (Theorem \ref{thm-LfAB}), which can be used to find easily an explicit formula for the Fr\'echet derivative of the matrix-matrix exponentiation in terms of the Fr\'echet derivatives of the matrix exponential and matrix logarithm. Formulae for the Fr\'echet derivatives of other bivariate matrix functions, such as iteration functions to the matrix square root (see \cite[Sec. 6.4]{Higham}) and to the matrix arithmetic-geometric mean (see \cite{Cardoso}), can also be obtained from the application of that result. The same holds for the second order Fr\'echet derivatives of primary matrix functions.

Given a map $f:\mathbb{C}^{n\times n}\times \mathbb{C}^{n\times n} \rightarrow\mathbb{C}^{n\times n}$, the Fr\'{e}chet derivative of $f$ at $(X,Y)$, with $X,Y\in\mathbb{C}^{n\times n}$, in the direction of $(E,F)$, where $E,F\in\mathbb{C}^{n\times n}$, is a linear operator $L_f(X,Y)$ that maps the ``direction matrix'' $(E,F)$ to $L_f(X,Y;E,F)$ such that
$$\lim_{(E,F)\rightarrow (0,0)}\frac{\|f(X+E,Y+F)-f(X,Y)-L_f(X,Y;E,F)\|}{\|(E,F)\|}=0.$$ The Fr\'{e}chet derivative of $f$ may not exist at $(X,Y)$, but if it does it is unique and coincides with the directional (or G\^ateaux) derivative of $f$ at $(X,Y)$ in the direction $(E,F)$. Hence, the existence of the Fr\'echet derivative guarantees that for any $E,F\in\mathbb{C}^{n\times n}$,
 $$L_f(X,Y;E,F)=\lim_{h\rightarrow 0}\frac{f(X+hE,Y+hF)-f(X,Y)}{h}.$$
 Any consistent matrix norm $\|.\|$ on $\mathbb{C}^{m\times n}$ induces the operator norm
 $$\ \|L_f(X,Y)\|:=\max_{(E,F)\neq 0}\,\frac{\|L_f(X,Y;E,F)\|}{\|(E,F)\|}.$$
 The (relative) condition number of $f$ at $(X,Y)$ is defined by
 \begin{equation}\label{cond-number}
\kappa_f(X,Y):=\frac{\|L_f(X,Y)\|\,\|(X,Y)\|}{\|f(X,Y)\|}.
\end{equation}
Hence, if an approximation to $L_f(X,Y;E,F)$ is known, then there exist numerical schemes to estimate $\|L_f(X,Y)\|$ (for instance, the power method on Fr\'{e}chet derivative proposed in \cite{Kenney}; see also \cite[Alg. 3.20]{Higham}) and then the condition number $\kappa_f(X,Y)$. As far as we know, we are the first to investigate the Fr\'echet derivative of the matrix-matrix exponentiation and its conditioning. In Section \ref{conditioning}, we discuss the efficient computation of $L_f(A,B;E,F)$, where $f(A,B):=A^B$, and propose a power method for estimating the Frobenius norm of $L_f(A,B)$ and then the corresponding condition number $\kappa_f(A,B)$. In the numerical experiments carried out in Section \ref{experiments} for several pairs of matrices $(A,B)$, two iterations of the power method suffices to estimate $\|L_f(A,B)\|_F$ (where $\|.\|_F$ stands for the Frobenius norm), with a relative error smaller than $10^{-3}$.

 Here, one uses the same notation to denote both the matrix norm and the induced operator norm. For more information on the Fr\'echet derivative and its properties see, for instance, \cite[Ch. X]{Bhatia} and \cite[Ch. 3]{Higham}. Note also that the pair $(E,F)$ corresponds, using matrix terminology, to the block matrix $\left[\begin{array}{c}
                             E \\
                             F
\end{array} \right]$. So the notation $\|(E,F)\|$ used above is clear.

To our knowledge, the terminology ``matrix-matrix exponentiation'' was firstly coined by Barradas and Cohen in \cite{Barradas}, where this function arises in a problem of Von Newmann's quantum-mechanical entropy. Some properties of the matrix-matrix exponentiation are addressed in \cite{Barradas}, for the particular case when $A$ is a normal matrix. We revisit some of those properties and derive new ones.

A particular case of the matrix-matrix exponentiation is the so called ``scalar-matrix exponentiation''. If $t$ is a complex number no belonging to $\mathbb{R}_0^-$, we can define $t^A$ as the function from $\mathbb{C}\times \mathbb{C}^{n\times n}$ to $\mathbb{C}^{n\times n}$ which assigns to each pair $(t,B)$ the $n\times n$ square complex matrix $t^A:=e^{\log t A}$.
This function appears in the definitions of matrix Gamma and Beta functions, which in turn can be applied to solving certain matrix differential equations \cite{Jodar1, Jodar2}. Gamma and Beta functions in matrix form are defined, respectively, as \cite{Jodar2}:
\begin{equation}\label{gamma}
\Gamma({A})=\int_{0}^{\infty}e^{-t}t^{{A}-{I}}dt,
\end{equation}
\begin{equation}\label{beta}
\mathcal{B}({A},{B})=\int_{0}^{1}t^{{A}-{I}}(1-t)^{{B}-{I}}dt.
\end{equation}
Our results apply easily to this particular case.\medskip

{\bf Notation:} $\|.\|$ denotes a subordinate matrix norm and $\|.\|_F$ the Frobenius norm; $\Im(z)$ is the imaginary part of the complex number $z$; $\sigma(A)$ is the spectrum of the matrix $A$; $A^\ast$ is the conjugate transpose of $A$, $L^\star_f(.)$ is the adjoint of the linear transformation $L_f(.)$. \medskip

The organization of the paper is as follows. In Section \ref{basic} we revisit some facts about the matrix-matrix exponentiation and add some related results not previously stated in the literature. A formula for the Fr\'echet derivative of certain bivariate matrix is proposed in Section \ref{frechet}. This formula is in turn used to derive a formula for the Fr\'echet derivative of the matrix-matrix exponentiation. It is also explained how it can be applied to the Fr\'echet derivative of well known bivariate functions. Section \ref{conditioning} is devoted to investigate the conditioning of the matrix-matrix exponentiation. In particular, an algorithm for estimating the relative condition number is propose. Its performance is illustrated by numerical experiments in Section \ref{experiments}. A few conclusions are drawn in Section \ref{conclusions}.

\section{Basic results}\label{basic}

In this section we present some theoretical results on the matrix-matrix exponentiation that can be derived from the properties of the much studied exponential and logarithm matrix functions.

According to the definition (\ref{AB}) and some well-known identities valid for the matrix exponential, we have
\begin{equation}\label{series}
{A}^{B}=\sum_{k=0}^{\infty}\frac{1}{k!}(\log( {A})\,{B})^{k},
\end{equation}
and
$$
{A}^{{B}}=\lim_{k\rightarrow \infty}\left( {I}+\frac{1}{k}\log({A}){B}\right)^{k}.
$$
In addition, ${A}^{{B}}$ can be considered as the solution of the matrix initial value problem
$$
\frac{d{X}(t)}{ dt}=(\log({A}){B}) {X}(t), \quad {X}(0)={I}.
$$

\begin{lem}\label{lema-basic}
If ${A}\in \mathbb{C}^{n\times n}$ has no eigenvalues on $\mathbb{R}_0^-$, ${B}$ is any square complex matrix, and $f(A,B)=A^B$, then the following properties hold:
\begin{enumerate}
    \item[(i)] ${A}^{0}={I}$ and ${{I}}^{{B}}={I}$; \\
    \vspace {-0.6cm}
     \item[(ii)] ${A}^{\alpha {I}}={A}^{\alpha}$, with $\alpha\in\mathbb{R}$. In particular, ${A}^{\frac{1}{2} {I}}={A}^{\frac{1}{2}}$ and ${A}^{- {I}}={A}^{-1};$ \\
    \vspace {-0.6cm}
    \item[(iii)] If the eigenvalues of $\log(A)B$ satisfy $-\pi<\Im(\lambda)<\pi$, then ${A}^{({BC})}=({A}^{{B}})^{{C}}$;\\
    \vspace {-0.6cm}
    \item[(iv)] ${A}^{-{B}}{A}^{{B}}={A}^{{B}}{A}^{-{B}}={I}$, therefore $({A}^{B})^{-1}={A}^{-{B}}$;\\
    \vspace {-0.6cm}
    \item[(v)] $({A}^{{B}})^{\ast}=\, ^{{B}^{\ast}}\hspace*{-0.2cm}{A}^{\ast}$, where $X^\ast$ stands for the conjugate transpose of $X$; \\
        \vspace {-0.6cm}
     \item[(vi)] If $S$ is an invertible matrix then $f(SAS^{-1},SBS^{-1})=S\, f(A,B)\, S^{-1}$.
\end{enumerate}
\end{lem}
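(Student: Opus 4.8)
The plan is to prove the six items of Lemma~\ref{lema-basic} essentially by reducing everything to known identities for the matrix exponential and the principal logarithm, exploiting the definition $A^B=e^{\log(A)B}$. For (i), substituting $B=0$ gives $A^0=e^0=I$, and since $\log(I)=0$ we get $I^B=e^{0\cdot B}=I$. For (ii), the key point is that $\log(A)$ commutes with itself, so $A^{\alpha I}=e^{\alpha\log(A)}$, and this equals the primary matrix function $A^\alpha$ by the standard characterization of real matrix powers via the principal logarithm (this is where one should cite the relevant fact from \cite{Higham,Horn94}); the two special cases $\alpha=\tfrac12$ and $\alpha=-1$ follow immediately. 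For (iii), I would write $(A^B)^C=e^{\log(A^B)C}$ and observe that, under the hypothesis $-\pi<\Im(\lambda)<\pi$ for every eigenvalue $\lambda$ of $\log(A)B$, we have $\log(A^B)=\log(e^{\log(A)B})=\log(A)B$ because the principal logarithm inverts the exponential precisely on that strip; hence $(A^B)^C=e^{\log(A)BC}=A^{BC}$.

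For (iv), note that $-B$ and $B$ are related by a sign, so $\log(A)(-B)=-\log(A)B$; since a matrix always commutes with its own negative, $e^{-\log(A)B}e^{\log(A)B}=e^{0}=I$ and likewise in the reverse order, which gives $A^{-B}A^B=A^BA^{-B}=I$ and therefore $(A^B)^{-1}=A^{-B}$. For (v), I would use $(e^X)^\ast=e^{X^\ast}$ and $(\log A)^\ast=\log(A^\ast)$ (valid since $A$ has no eigenvalues on $\mathbb{R}_0^-$ iff $A^\ast$ does, and the principal branch is preserved under conjugate transposition), so that $(A^B)^\ast=\bigl(e^{\log(A)B}\bigr)^\ast=e^{(\log(A)B)^\ast}=e^{B^\ast(\log A)^\ast}=e^{B^\ast\log(A^\ast)}={}^{B^\ast}\hspace*{-0.2cm}A^\ast$, matching the definition of the left exponentiation. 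For (vi), similarity invariance of both $\exp$ and $\log$ gives $\log(SAS^{-1})=S\log(A)S^{-1}$, hence $\log(SAS^{-1})(SBS^{-1})=S\log(A)BS^{-1}$, and applying the exponential, $f(SAS^{-1},SBS^{-1})=e^{S\log(A)BS^{-1}}=Se^{\log(A)B}S^{-1}=S\,f(A,B)\,S^{-1}$.

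The only genuinely delicate point is (iii): one must be careful that the composite power $(A^B)^C$ is well defined, i.e.\ that $A^B$ has no eigenvalues on $\mathbb{R}_0^-$ so that $\log(A^B)$ makes sense, and that the branch hypothesis on the eigenvalues of $\log(A)B$ is exactly what is needed to ensure $\log(e^{\log(A)B})=\log(A)B$ rather than some branch shift. The eigenvalues of $\log(A)B$ lying in the open strip guarantee the eigenvalues of $A^B=e^{\log(A)B}$ avoid $\mathbb{R}_0^-$ (and in fact the negative real axis corresponds to $\Im=\pm\pi$), so both issues are handled by the same condition. Everything else is a routine chain of standard exponential/logarithm identities, so the write-up should be short; I would present items (i), (ii), (iv), (v), (vi) in a couple of lines each and devote the bulk of the argument to justifying the branch identity used in (iii).
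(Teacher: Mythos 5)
Your proposal is correct and follows the same route as the paper, which simply declares all six items to be immediate consequences of standard properties of the matrix exponential and principal logarithm (citing \cite{Higham,Horn94}); you have merely written out those routine reductions explicitly, including the branch condition in (iii) that the paper's hypothesis already encodes. No gaps.
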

\begin{proof}
Immediate consequence from properties of matrix exponential and matrix logarithm. See \cite{Higham,Horn94}.
\end{proof}

The following example shows that explicit formulae for matrix-matrix exponentiation may involve complicated expressions, even for the case $2\times 2$, with $A$ normal.

\begin{exa}
{\rm Let ${A}=\bigl[\begin{smallmatrix}
               a & b \\
               -b & a \\
\end{smallmatrix} \bigr]$ be a nonsingular normal matrix and ${B}=\bigl[\begin{smallmatrix}
                             \alpha & \beta \\
                             \gamma & \delta \\
\end{smallmatrix} \bigr]$ be an arbitrary matrix. Our aim is to find a closed expression for $A^B$. The eigenvalues and eigenvectors of ${A}$ are displayed in matrices $D$ and $V$, respectively:
$${D}={\diag}(\lambda_{_{1}},\lambda_{2})=\left[\begin{array}{cc}
      a-ib & 0 \\
      0 & a+ib \\
    \end{array}\right], \quad {V}=\left[\begin{array}{rr}
                             i & -i \\
                             1 & 1 \\
                           \end{array}\right],
$$
where $a-ib=\overline{z}=re^{-i\theta}$ and $a+ib=z=re^{i\theta}$  for $-\pi<\theta\leq\pi$. It is clear that in the sense of polar notation, we have $r^{2}=a^{2}+b^{2}$ and $\theta=\arctan(b/a)$ (provided that $a\neq 0$). Therefore, the logarithm of ${A}$ can be evaluated by the decomposition $\log({A})={V}\log({D}){V}^{-1}$ as following:
$$
\log({A})=\left[\begin{array}{rr}
                             i & -i \\
                             1 & 1 \\
                           \end{array}\right]
                           \left[\begin{array}{cc}
      \log(\overline{z}) & 0 \\
      0 & \log(z) \\
    \end{array}\right]\left[\begin{array}{rr}
                             i & -i \\
                             1 & 1 \\
                           \end{array}\right]^{-1}=\left[\begin{array}{cc}
                             \log(r) & -\theta \\
                             \theta & \log(r) \\
                          \end{array}\right].
$$
Hence, multiplying the matrices $\log({A})$ and ${B}$,
$$
\log({A}){B}=\left[\begin{array}{cc}
                             \alpha\log(r)-\theta\gamma & \beta\log(r)-\theta\delta \\
                             \gamma\log(r)+\theta\alpha & \delta\log(r)+\theta\beta \\
                           \end{array}\right].
$$
It is known that the exponential of an $2\times2$ matrix ${M}=\bigl[\begin{smallmatrix}
               m_{11} & m_{12} \\
               m_{21} & m_{22} \\
\end{smallmatrix} \bigr]$, can be explicitly obtained by the following relation (see \cite{Rowland}):
$$
e^{{M}}=\frac{1}{\Omega}\left[\begin{array}{cc}
          e^{\frac{m_{11}+m_{22}}{2}}\left[\Omega\cosh(\frac{\Omega}{2})+(m_{11}-m_{22})\sinh(\frac{\Omega}{2})\right]   & 2m_{12}e^{\frac{m_{11}+m_{22}}{2}}\sinh(\frac{\Omega}{2}) \\
          2m_{21}e^{\frac{m_{11}+m_{22}}{2}}\sinh(\frac{\Omega}{2})   &  e^{\frac{m_{11}+m_{22}}{2}}\left[\Omega\cosh(\frac{\Omega}{2})+(m_{22}-m_{11})\sinh(\frac{\Omega}{2})\right] \\
                           \end{array}\right].
$$
where, $\Omega=\sqrt{(m_{11}-m_{22})^{2}+4m_{12}m_{21}}$. Consequently, an explicit formula for ${A}^{{B}}$ can be obtained via substituting:
$$m_{11}=\log(r^{\alpha})-\theta\gamma, \quad m_{12}=\log(r^{\beta})-\theta\delta, $$
$$m_{21}=\log(r^{\gamma})+\theta\alpha, \quad m_{22}= \log(r^{\delta})+\theta\beta. $$
}
\end{exa}

\medskip
As mentioned before, some facts about the matrix-matrix exponentiation have been reported in \cite{Barradas}, under the assumption of $A$ being normal. One of them is  revisited in (i) of the next lemma. However, the relationships between $A^B$ and $^B\hspace*{-0.8mm}A$, and their spectra, stated in (ii) and (iii) of the following lemma are new.

\begin{lem}\label{lemma2}
If ${A}\in \mathbb{C}^{n\times n}$ has no eigenvalues on $\mathbb{R}_0^-$, and ${B}$ is any square complex matrix, then the following properties hold:
\begin{enumerate}
    \item[(i)] $A^B$ and $^B\hspace*{-0.8mm}A$ have the same spectra;\\
    \vspace {-0.6cm}
    \item[(ii)] If $A$ and $B$ commute, and have spectra $\sigma(A)=\{\alpha_1,\ldots,\alpha_n\}$, $\sigma(B)=\{\beta_1,\ldots,\beta_n\}$, then the spectrum of $A^B$ (or $^B\hspace*{-0.7mm}A$) is given by $\{\alpha_{i_1}^{\beta_{j_1}},\ldots,\alpha_{i_n}^{\beta_{j_n}}\}$ for some permutations $\{i_1,\ldots,i_n\}$ and $\{j_1,\ldots,j_n\}$ of the set $\{1,\ldots,n\}$; \\
    \vspace {-0.6cm}
    \item[(iii)] $B\,A^B=\,^B\hspace*{-0.7mm}A\,B.$
\end{enumerate}
\end{lem}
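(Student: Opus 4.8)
I would prove the three parts independently, using in each case that both $A^{B}=e^{\log(A)B}$ and ${}^{B}\!A=e^{B\log(A)}$ are exponentials of a \emph{product} of two matrices, one of which (namely $\log(A)$) is a polynomial in $A$. The quickest is (iii): writing $P:=\log(A)$ and using the elementary identity $B\,(PB)^{k}=(BP)^{k}\,B$ for every integer $k\ge0$ (a trivial induction on $k$), I multiply the series (\ref{series}) on the left by $B$ and, by linearity and the absolute convergence of the exponential series in any matrix norm, interchange the sum with the multiplication:
$$B\,A^{B}=B\sum_{k\ge0}\frac{1}{k!}(PB)^{k}=\sum_{k\ge0}\frac{1}{k!}(BP)^{k}\,B=e^{B\log(A)}\,B={}^{B}\!A\,B.$$

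For (i), I would combine two standard facts: for any $X,Y\in\mathbb{C}^{n\times n}$ the products $XY$ and $YX$ have the same characteristic polynomial, so $\sigma(\log(A)B)=\sigma(B\log(A))$ (with multiplicities); and the spectral mapping theorem for the exponential, $\sigma(e^{M})=\{e^{\mu}:\mu\in\sigma(M)\}$. Together these give
$$\sigma(A^{B})=\sigma\!\left(e^{\log(A)B}\right)=\{e^{\mu}:\mu\in\sigma(\log(A)B)\}=\{e^{\mu}:\mu\in\sigma(B\log(A))\}=\sigma\!\left(e^{B\log(A)}\right)=\sigma({}^{B}\!A).$$
(Alternatively, (i) follows from (iii) by similarity when $B$ is invertible and then by a density argument in general, but the spectral-mapping route avoids this.)

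For (ii), since $A$ and $B$ commute and $\log(A)$ is a primary matrix function of $A$ (hence a polynomial in $A$), the matrices $\log(A)$ and $B$ commute as well; being a commuting pair of complex matrices, they can be simultaneously triangularized, so there is an invertible $S$ with $S^{-1}\log(A)S=:T_{1}$ and $S^{-1}BS=:T_{2}$ both upper triangular. Because $A$ has no eigenvalue on $\mathbb{R}_{0}^{-}$, the eigenvalues of $\log(A)$ are the principal logarithms $\log\alpha_{i}$ of those of $A$; thus the diagonal of $T_{1}$ is $\log\alpha_{i_{1}},\dots,\log\alpha_{i_{n}}$ and that of $T_{2}$ is $\beta_{j_{1}},\dots,\beta_{j_{n}}$ for suitable permutations $\{i_{k}\}$, $\{j_{k}\}$. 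Then $S^{-1}(\log(A)B)S=T_{1}T_{2}$ is upper triangular with diagonal entries $\log(\alpha_{i_{k}})\,\beta_{j_{k}}$, and $S^{-1}A^{B}S=e^{T_{1}T_{2}}$ is upper triangular with diagonal entries $e^{\log(\alpha_{i_{k}})\beta_{j_{k}}}=\alpha_{i_{k}}^{\beta_{j_{k}}}$, using the scalar definition $\alpha^{\beta}=e^{\beta\log\alpha}$ (legitimate since $\alpha_{i_{k}}\notin\mathbb{R}_{0}^{-}$). This yields the asserted form of $\sigma(A^{B})$, and the corresponding statement for ${}^{B}\!A$ is immediate from (i).

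The delicate point is in (ii): one must not over-claim, since the two permutations are coupled by the particular simultaneous triangularization chosen — the pairing $(i_{k},j_{k})$ is whatever the ordering of the common invariant flag forces — so the correct assertion is the existential one stated in the lemma, and it matters that each $\alpha_{i_{k}}$ avoids the branch cut so that $\alpha_{i_{k}}^{\beta_{j_{k}}}$ is well defined. The remaining ingredients — the $XY\sim YX$ characteristic-polynomial identity, the spectral mapping theorem, and simultaneous triangularization of a commuting pair of matrices — are classical and can be cited from \cite{Horn94}.
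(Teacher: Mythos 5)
Your proof is correct and follows essentially the same route as the paper: (iii) is the identity $Be^{XY}=e^{YX}B$ (which the paper simply cites from Higham, Cor.~1.34, and you reprove by the series argument), (i) is the classical fact that $XY$ and $YX$ have the same spectrum combined with the spectral mapping theorem for $e^M$, and (ii) rests on $\log(A)$ commuting with $B$. If anything, your simultaneous-triangularization argument in (ii) is more complete than the paper's one-line proof, which only records the inclusion $\sigma(\log(A)B)\subset\{\log(\alpha_i)\beta_j\}$ and leaves the permutation pairing implicit, whereas your common invariant flag exhibits it explicitly.
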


\begin{proof}

\begin{enumerate}
\item[(i)] See \cite[Thm. 3.2]{Barradas}. This follows immediately from the classical result of matrix theory stating that when $X$ and $Y$ are square matrices, both products $XY$ and $YX$ have the same spectra (see, for instance, Theorem 1.3.20 and Problem 9 in \cite{Horn85}).
\item[(ii)] Since $A$ and $B$ commute, $\log(A)$ and $B$ also commute. Hence, the results follows from the fact that
$$\sigma\left(\log(A)B\right)\subset\left\{\log(\alpha_i)\beta_j:\ i,j=1,\ldots,n\,\right\}.$$
\item[(iii)] Immediate consequence of the identity $Ye^{XY}=e^{YX}Y$, that is valid for any square complex matrices of order $n$; see \cite[Cor. 1.34]{Higham}.
\end{enumerate}
\end{proof}

One important implication of the statement (iii) of Lemma \ref{lemma2} is that when $B$ in nonsingular, $^B\hspace*{-0.8mm}A$ can be computed easily from $A^B$:
$$^B\hspace*{-0.8mm}A=B\, A^B\,B^{-1}.$$

A natural way of computing the matrix-matrix exponentiation $A^B$ is to first evaluate $\log(A)$ and then the exponential of $\log(A)B$. Matrix exponential and logarithm are much studied functions and one can found many methods for computing them in the literature. The most popular method to the matrix exponential is the so-called scaling and squaring method combined with Pad\'e approximation, that has been investigated and improved by many authors; see for instance
\cite[Ch. 10]{Higham} and the references therein and also the more recent paper \cite{Mohy09} that includes the algorithm where the \texttt{expm} function of recent versions of MATLAB is based on. The MATLAB function \texttt{logm} implements the algorithm provided in \cite{Mohy12,Mohy13}, which is an improved version of the inverse scaling and squaring with Pad\'e approximants method proposed in \cite{Kenney}. Other methods for approximating these functions include, for instance, the Taylor polynomial based methods for the matrix exponential proposed in \cite{Sastre} and the iterative transformation-free method of \cite{Cardoso} for the matrix logarithm.

A topic that needs further research is the development of algorithms for the matrix-matrix exponentiation that are less expensive than the computation of one matrix exponential and one matrix logarithm plus a matrix product. This seems to be a very challenging issue, especially when $B$ does not commute with $A$. Of course, for some particular cases of the matrix-matrix exponentiation (e.g., the matrix square root, matrix $p$-th roots, the matrix inverse) there are more efficient methods that do not involve the computation of matrix exponentials and logarithms. This problem becomes easier even in the more general case when $A$ and $B$ commute. This is because both matrices may share the same Schur decomposition which reduces considerably the computational effort.

\section{The Fr\'echet derivative of bivariate matrix functions}\label{frechet}

 A key result, very useful from  both theoretical and computational perspectives, related with the Fr\'echet derivative of a primary matrix function $\phi:\mathbb{C}^{n\times n} \rightarrow\mathbb{C}^{n\times n}$, states that
\begin{equation}\label{block-1}
\phi\left(\left[
		\begin{array}{cc}
		 X & E  \\
		 0 & X
		\end{array}
		\right]\right)=\left[
		\begin{array}{cc}
		 \phi(X) & L_\phi(X,E)  \\
		 0 & \phi(X)
		\end{array}
		\right],
\end{equation}
where $\phi$ is a scalar complex function $2n-1$ times continuously differentiable on an open subset containing the spectrum of $X$, the matrix $E\in\mathbb{C}^{n\times n}$ is arbitrary and $L_\phi(X,E)$ denotes the Fr\'echet derivative of $\phi$ at $X$ in the direction of $E$ (see \cite[Thm. 2.1]{Mathias} and \cite[Eq. (3.16)]{Higham}).

Next theorem extends the identity (\ref{block-1}) to certain bivariate matrix functions.

\begin{thm}\label{thm-LfAB}
Let $X=\left[x_{ij}\right]_{i,j},\,Y=\left[y_{ij}\right]_{i,j},\,E,F\in \mathbb{C}^{n\times n}$ and assume that $f(X,Y)\in\mathbb{C}^{n\times n}$
is a bivariate matrix function with partial derivatives $\frac{\partial f}{\partial x_{ij}}$ and $\frac{\partial f}{\partial y_{ij}}$ being continuous functions on an open subset ${\mathcal S}\subset \mathbb{C}^{n\times n}\times \mathbb{C}^{n\times n}$ containing $(X,Y)$. If the curves $X(t):=X+tE$ and $Y(t):=Y+tF$ are differentiable at $t=0$, with $\left(X(t),Y(t)\right)\in {\mathcal S}$ for all $t$ in a certain neighborhood of $0$, $f$ maps $2\times 2$--block upper triangular matrices to $2\times 2$--block upper triangular, then
 \begin{equation}\label{block-2}
f\left(\left[
		\begin{array}{cc}
		 X & E  \\
		 0 & X
		\end{array}
		\right],\,
\left[\begin{array}{cc}
		 Y & F  \\
		 0 & Y
		\end{array}
		\right]\right)=
\left[
		\begin{array}{cc}
		 f(X,Y) & L_f(X,Y;E,F)  \\
		 0 & f(X,Y)
		\end{array}
		\right].
\end{equation}
\end{thm}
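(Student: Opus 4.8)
The plan is to reduce the bivariate statement to the known univariate block-triangular identity (\ref{block-1}) by working along the curves $X(t)$ and $Y(t)$ and exploiting the hypothesis that $f$ preserves the $2\times 2$--block upper triangular structure. First I would consider the block matrices
$$
\widetilde{X}:=\left[\begin{array}{cc} X & E \\ 0 & X \end{array}\right],\qquad
\widetilde{Y}:=\left[\begin{array}{cc} Y & F \\ 0 & Y \end{array}\right],
$$
and observe that, by the block-triangular-preservation hypothesis, $f(\widetilde{X},\widetilde{Y})$ is itself block upper triangular of the form
$$
f(\widetilde{X},\widetilde{Y})=\left[\begin{array}{cc} P & Q \\ 0 & R \end{array}\right]
$$
for some $P,Q,R\in\mathbb{C}^{n\times n}$. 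The task is then to identify $P=R=f(X,Y)$ and $Q=L_f(X,Y;E,F)$.

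For the diagonal blocks, I would use the fact that $\widetilde{X}$ is itself block triangular with both diagonal blocks equal to $X$ (similarly for $\widetilde{Y}$); restricting the action of $f$ to the leading and trailing $n\times n$ principal submatrices, which by the structural hypothesis is compatible with $f$, forces $P=R=f(X,Y)$. (Concretely, one can conjugate $\widetilde{X},\widetilde{Y}$ by the permutation that swaps the two blocks, or simply note that deleting the off-diagonal blocks cannot change the diagonal blocks of the output since $f$ maps block-triangular to block-triangular and the diagonal blocks of the output depend only on the diagonal blocks of the inputs — this last point is exactly what needs care and is best argued via the curve below.)

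For the off-diagonal block, I would pass to the G\^ateaux derivative. Replace $E,F$ by $tE,tF$: then
$$
\left[\begin{array}{cc} X & tE \\ 0 & X \end{array}\right]
= \left[\begin{array}{cc} I & tI \\ 0 & I \end{array}\right]
\left[\begin{array}{cc} X & 0 \\ 0 & X \end{array}\right]
\left[\begin{array}{cc} I & tI \\ 0 & I \end{array}\right]^{-1}
+ O(t)\ \text{corrections},
$$
but the cleaner route is: apply $f$ to $\bigl(\left[\begin{smallmatrix} X(t) & E \\ 0 & X(t)\end{smallmatrix}\right],\left[\begin{smallmatrix} Y(t) & F \\ 0 & Y(t)\end{smallmatrix}\right]\bigr)$ is not quite what is wanted either. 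Instead, fix the block sizes and use the chain rule in the entries: the map $t\mapsto f(\widetilde{X},\widetilde{Y})$ need not be differentiated, but the map $t\mapsto f(X+tE,Y+tF)$ is, by the continuity-of-partials hypothesis and differentiability of the curves, and its derivative at $0$ is $L_f(X,Y;E,F)$. The bridge is the observation that block upper triangular matrices with equal diagonal blocks form a commutative-nilpotent-extension situation: writing $\widetilde{X}=X\otimes I_2 + E\otimes N$ and $\widetilde{Y}=Y\otimes I_2 + F\otimes N$ with $N=\left[\begin{smallmatrix}0&1\\0&0\end{smallmatrix}\right]$, $N^2=0$, and expanding $f$ in a formal Taylor/entrywise sense about $(X,Y)$, only the first-order term in $N$ survives, and its coefficient is precisely the sum of the entrywise partial derivatives contracted against $E$ and $F$, i.e. $L_f(X,Y;E,F)$. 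I would make this rigorous by applying the univariate result (\ref{block-1}) to the curve $g(t):=f(X+tE,Y+tF)$ viewed as a one-parameter family, or more directly by noting that on the algebra of such block matrices $f$ acts as $f(X,Y)I_2 + L_f(X,Y;E,F)\,N$, which is forced by (i) the structure-preservation hypothesis, (ii) the diagonal-block computation above, and (iii) matching the $t$-linear term of both sides after the substitution $E\leftarrow tE$, $F\leftarrow tF$ and dividing by $t$, then letting $t\to 0$ and invoking the definition of the G\^ateaux (hence Fr\'echet) derivative.

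The main obstacle is the off-diagonal block: one must justify that applying $f$ to the block matrices genuinely produces the \emph{first-order} Fr\'echet derivative in the $(1,2)$ position and nothing more — equivalently, that the $N^2=0$ truncation is exact rather than merely a leading-order approximation. For primary matrix functions this is classical (it follows from the polynomial interpolation characterization of $\phi(\widetilde{X})$); for a general bivariate $f$ satisfying only the stated hypotheses, the safest argument is the curve/G\^ateaux reduction: substitute $tE,tF$, use that $f$ still maps block-triangular to block-triangular so the $(1,2)$ block is some function $Q(t)$ with $Q(0)=0$, differentiate the block identity in $t$ at $0$ using the continuity of the partials, and recognize $Q'(0)=L_f(X,Y;E,F)$; since $f$ of the $t=1$ block matrix must have $(1,2)$ block that is \emph{linear} in $(E,F)$ (because the block-triangular output's $(1,2)$ entry, as a function of the input $(1,2)$ entries with the diagonal fixed, inherits linearity from the structure-preservation forcing $f(\widetilde X,\widetilde Y)$ to lie in the commutative algebra generated by $I_2$ and $N$), we get $Q(1)=Q'(0)=L_f(X,Y;E,F)$, completing the proof.
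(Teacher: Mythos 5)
Your proposal tries to extract the conclusion from the two stated hypotheses alone (smoothness of the entries plus ``$f$ maps block upper triangular to block upper triangular''), and the steps where all the content lies are exactly the ones you assert rather than prove: (a) that the diagonal blocks of $f(\widetilde X,\widetilde Y)$ depend only on the diagonal blocks of the input and equal $f(X,Y)$; (b) that the $(1,2)$ block $Q$ is \emph{linear} in $(E,F)$, so that $Q(1)=Q'(0)$; and (c) that $Q'(0)=L_f(X,Y;E,F)$. None of these follows from structure preservation plus smoothness. A concrete test case: the entrywise (Hadamard) square $f(X,Y):=X\circ X$ has continuous partials and maps block upper triangular matrices to block upper triangular matrices, yet
$f\left(\left[\begin{smallmatrix} X & E\\ 0 & X\end{smallmatrix}\right],\left[\begin{smallmatrix} Y & F\\ 0 & Y\end{smallmatrix}\right]\right)=\left[\begin{smallmatrix} X\circ X & E\circ E\\ 0 & X\circ X\end{smallmatrix}\right]$,
whose $(1,2)$ block is not linear in $E$ and differs from $L_f(X,Y;E,F)=2\,X\circ E$; moreover, with your substitution $E\leftarrow tE$ one gets $Q(t)=t^2E\circ E$, so $Q'(0)=0\neq L_f$. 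So the claim that structure preservation ``forces $f(\widetilde X,\widetilde Y)$ to lie in the algebra generated by $I_2$ and $N$'' is a non sequitur, and the passage $Q(1)=Q'(0)$ is unjustified; the ``main obstacle'' you identify (exactness of the $N^2=0$ truncation) is genuinely not closed by your argument. (In fairness, this example also shows the theorem is under-hypothesized as stated; but your proof cannot be right either, since it purports to derive the identity from hypotheses under which it is false.)

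The paper's own proof quietly uses two additional structural properties, and these are what make the truncation exact: similarity invariance, $f(SXS^{-1},SYS^{-1})=S\,f(X,Y)\,S^{-1}$, and blockwise action on block-diagonal pairs (in the displayed chain $f$ is even replaced by $e^{\log(\cdot)\,\cdot}$, i.e.\ the argument is really tailored to $A^B$-type functions). Concretely, with $U=\left[\begin{smallmatrix} I & I/\epsilon\\ 0 & I\end{smallmatrix}\right]$ the block upper triangular arguments are conjugated into block-diagonal pairs built from $(X(0),Y(0))$ and $(X(\epsilon),Y(\epsilon))$; applying $f$ blockwise and undoing the conjugation places the divided difference $\bigl(f(X(\epsilon),Y(\epsilon))-f(X,Y)\bigr)/\epsilon$ in the $(1,2)$ position, and letting $\epsilon\to 0$ (using the continuity of the partials, which guarantees the Fr\'echet derivative exists and coincides with the G\^ateaux derivative) yields the Mathias-type identity and hence \eqref{block-2}. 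To repair your proof you would need to assume and invoke these properties (which do hold for $f(A,B)=A^B$, since $\log$ and $\exp$ act blockwise on block triangular/diagonal matrices and are similarity invariant), rather than attempting to force linearity of the $(1,2)$ block out of block-triangularity preservation alone.
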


\begin{proof}
 Since the $2n^2$ partial derivatives $\frac{\partial f}{\partial x_{ij}}$ and $\frac{\partial f}{\partial y_{ij}}$ exist and are continuous on the open subset ${\mathcal S}$, the Fr\'echet derivative of $f$ on ${\mathcal S}$ exists (see \cite[Sec. 3.1]{Cheney}) and thus coincides with the G\^{a}teaux derivative.

Assuming that $X(t)$ and $Y(t)$ are differentiable at $t=0$ and $\left(X(t),Y(t)\right)\in {\mathcal S}$ for all $t$ in a certain neighborhood of $0$, we shall prove below that an analogue identity to the one in \cite[Eq. (1.1)]{Mathias} (see also \cite[Thm. 3.6]{Higham}) holds for our function $f$, that is,
\begin{equation}\label{eq-mathias}
f\left(\left[
		\begin{array}{cc}
		 X & X'(0)  \\
		 0 & X
		\end{array}
		\right],\,
\left[\begin{array}{cc}
		 Y & Y'(0)  \\
		 0 & Y
		\end{array}
		\right]\right)=
\left[
		\begin{array}{cc}
		 f(X,Y) & \left.\frac{d}{dt}\right|_{t=0} f\left(X(t),Y(t)\right)  \\
		 0 & f(X,Y)
		\end{array}
		\right].
\end{equation}
Indeed, denoting
$$U=\left[\begin{array}{cc}
		 I & I/\epsilon  \\
		 0 & I
		\end{array}
		\right],$$
with $\epsilon\neq 0$, we have
\begin{eqnarray*}
&&f\left(\left[
		\begin{array}{cc}
		 X(0) & \frac{X(\epsilon)-X(0)}{\epsilon}  \\
		 0 & X(0)
		\end{array}
		\right],\,
\left[\begin{array}{cc}
		 Y(0) & \frac{Y(\epsilon)-Y(0)}{\epsilon}  \\
		 0 & Y(0)
		\end{array}
		\right]\right)
=\\
&&\quad\qquad=U\,f\left(U^{-1}\left[
		\begin{array}{cc}
		 X(0) & \frac{X(\epsilon)-X(0)}{\epsilon}  \\
		 0 & X(0)
		\end{array}
		\right]\,U,\,
U^{-1}\left[\begin{array}{cc}
		 Y(0) & \frac{Y(\epsilon)-Y(0)}{\epsilon}  \\
		 0 & Y(0)
		\end{array}
		\right]\,U\right)U^{-1}\\
&&\quad\qquad=U\,f\left(\left[
		\begin{array}{cc}
		 X(0) & 0  \\
		 0 & X(\epsilon)
		\end{array}
		\right],\,
\left[\begin{array}{cc}
		 Y(0) & 0  \\
		 0 & Y(\epsilon)
		\end{array}
		\right]\right)U^{-1}\\
&&\quad\qquad=
U\, \left[\begin{array}{cc}
		 e^{\log(X(0))\,Y(0)} & 0  \\
		 0 & e^{\log(X(\epsilon))\,Y(\epsilon)}
		\end{array}
		\right]\,U^{-1}\\
&&\quad\qquad=\left[\begin{array}{cc}
		 f(X,Y) & \frac{f\left(X(\epsilon),Y(\epsilon)\right)-f(X,Y)}{\epsilon}  \\
		 0 & f\left(X(\epsilon),Y(\epsilon)\right)
		\end{array}
		\right],
\end{eqnarray*}
from which the result follows by evaluating the limit of the above matrices when $\epsilon \rightarrow 0$.
\end{proof}

An explicit formula for the Fr\'echet derivative of the matrix-matrix exponentiation, in terms of the Fr\'echet derivatives of the matrix exponential and matrix logarithm, is given in the next corollary.

\begin{corr}\label{cor1}
Let ${\mathcal S}$ be the open subset formed by all pairs $(A,B)$ with $A$ having no eigenvalues on $\mathbb{R}_0^-$ and $B$ arbitrary. Denoting  $f(A,B):=A^B$, it holds
\begin{equation}\label{explicit}
L_f(A,B;E,F)=L_{\exp}\big( \log(A)\,B;\log(A)F+L_{\log}(A;E)\,B\big),
\end{equation}
where $L_{\exp}$ and $L_{\log}$ stand for the Fr\'echet derivatives of the matrix exponential and matrix logarithm, respectively.
\end{corr}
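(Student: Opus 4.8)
The plan is to derive \eqref{explicit} by combining the block-triangular formula of Theorem \ref{thm-LfAB} with the known block-triangular behaviour of the primary matrix functions $\exp$ and $\log$, together with the chain rule for Fr\'echet derivatives. First I would verify that the hypotheses of Theorem \ref{thm-LfAB} are met for $f(A,B)=A^B=e^{\log(A)B}$ on the open set $\mathcal S$: the entries of $A^B$ depend real-analytically (hence with continuous partials) on the entries of $A$ and $B$ on $\mathcal S$, since $\log$ is analytic off $\mathbb R_0^-$ and $\exp$ is entire; and $f$ sends $2\times 2$--block upper triangular pairs to block upper triangular matrices because both $\log$ of a block-triangular matrix and $\exp$ of a block-triangular matrix are block upper triangular (and the product of block upper triangular matrices is block upper triangular). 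This last point is exactly what makes \eqref{block-2} applicable.

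Next I would evaluate the left-hand side of \eqref{block-2} directly. Writing $\widetilde A=\bigl[\begin{smallmatrix} A & E \\ 0 & A\end{smallmatrix}\bigr]$ and $\widetilde B=\bigl[\begin{smallmatrix} B & F \\ 0 & B\end{smallmatrix}\bigr]$, I would apply \eqref{block-1} to $\log$ to get
$$\log(\widetilde A)=\begin{bmatrix} \log(A) & L_{\log}(A;E) \\ 0 & \log(A)\end{bmatrix},$$
then multiply the two block upper triangular matrices $\log(\widetilde A)$ and $\widetilde B$ to obtain
$$\log(\widetilde A)\,\widetilde B=\begin{bmatrix} \log(A)B & \log(A)F+L_{\log}(A;E)B \\ 0 & \log(A)B\end{bmatrix},$$
and finally apply \eqref{block-1} to $\exp$ at $\log(A)B$ in the direction $\log(A)F+L_{\log}(A;E)B$, yielding the $(1,2)$ block $L_{\exp}\bigl(\log(A)B;\,\log(A)F+L_{\log}(A;E)B\bigr)$. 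Comparing this $(1,2)$ block with the $(1,2)$ block $L_f(A,B;E,F)$ from the right-hand side of \eqref{block-2} gives \eqref{explicit}. Alternatively, and perhaps more transparently, the same identity follows from the multivariate chain rule: $L_f(A,B;E,F)=L_{\exp}\bigl(\log(A)B;\,L_{g}(A,B;E,F)\bigr)$ where $g(A,B)=\log(A)B$, and $L_g(A,B;E,F)=L_{\log}(A;E)\,B+\log(A)\,F$ by the product rule, since $B\mapsto \log(A)B$ is linear.

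I do not expect a serious obstacle here; the one point deserving care is the justification that $\exp$ and $\log$ applied to the $2\times 2$--block upper triangular matrices $\widetilde A,\widetilde B$ are themselves block upper triangular, and that formula \eqref{block-1} applies — this needs $\exp$ to be entire (it is) and $\log$ to be $2n-1$ times continuously differentiable on a neighbourhood of $\sigma(\widetilde A)=\sigma(A)\subset\mathbb C\setminus\mathbb R_0^-$ (it is analytic there). A secondary bookkeeping point is that the hypotheses of Theorem \ref{thm-LfAB} formally require $f$ to map block upper triangular pairs to block upper triangular matrices; I would note this is inherited from the corresponding stability of $\log$, matrix multiplication, and $\exp$ under the block upper triangular structure. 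With these remarks in place the corollary is immediate.
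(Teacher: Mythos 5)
Your proof is correct and follows essentially the same route as the paper: verify the hypotheses of Theorem \ref{thm-LfAB}, apply the block formula \eqref{block-1} to $\log$, multiply the block upper triangular matrices, apply \eqref{block-1} to $\exp$, and read off the $(1,2)$ block from \eqref{block-2}. Your added care about why the hypotheses hold (analyticity of $\log$ off $\mathbb{R}_0^-$, preservation of block triangular structure) and the alternative chain-rule remark are fine but do not change the argument.
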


\begin{proof}
It easy to check that the conditions of Theorem \ref{thm-LfAB} are met. Then the result follows immediately from the identities
\begin{eqnarray*}
f\left(\left[
		\begin{array}{cc}
		 A & E  \\
		 0 & A
		\end{array}
		\right],\,
\left[\begin{array}{cc}
		 B & F  \\
		 0 & B
		\end{array}
		\right]\right)
&=&
e^{\log\left(\left[
		\begin{array}{cc}
		 A & E  \\
		 0 & A
		\end{array}
		\right]\right)\,
\left[\begin{array}{cc}
		 B & F  \\
		 0 & B
		\end{array}
		\right]}\\
&=&
e^{\left[
		\begin{array}{cc}
		 \log(A) & L_{\log}(A;E)  \\
		 0 & \log(A)
		\end{array}
		\right]\,
\left[\begin{array}{cc}
		 B & F  \\
		 0 & B
		\end{array}
		\right]}\\
&=&
\left[\begin{array}{cc}
		 e^{\log(A)\,B} & L_{\exp}\left( \log(A)\,B;\log(A)F+L_{\log}(A;E)\,B\right)  \\
		 0 & e^{\log(A)\,B}
		\end{array}
		\right].
\end{eqnarray*}
\end{proof}

If $A=tI$, with $t$ not belonging to the closed negative real axis, then $(tI)^B=t^B$ is the scalar-matrix exponentiation which arises in matrix Beta and Gamma functions defined in (\ref{beta}) and (\ref{gamma}), respectively. The Fr\'echet derivative in this special case can be written as
$$L_f(t,B;\epsilon,F)=L_{\exp}\left(\log(t)A;\log(t)F+B\epsilon/t\right).$$
For $B=\alpha\,I$, with $\alpha\in\mathbb{R}$, the matrix-matrix exponentiation $f$ reduces to the single variable matrix function $f(A)=A^\alpha$ of real powers of $A$, which has been addressed recently in \cite{Higham11,Higham13}. Provided that $\alpha$ is not affected by any kind of perturbation (that is, $F=0$), (\ref{explicit}) reduces to formula (2.4) in \cite{Higham11}, that has been obtained using other techniques. Note that while  (\ref{explicit}) covers the case when $\alpha$ is perturbed, formula (2.4) in \cite{Higham11} does not.

In addition to the result of the previous corollary, the identity (\ref{block-2}) provides alternative means for obtaining closed expressions for the Fr\'echet derivatives of other known bivariate matrix functions. For instance, many iteration functions for approximating the matrix square root (\cite{Higham97}, \cite[Ch. 6]{Higham}) or, more generally, for the matrix $p$-th root \cite{Guo, Iannazzo} are of the form
$$g(X,Y)=\left[\begin{array}{c}
		 g_1(X,Y)  \\
		 g_2(X,Y)
		\end{array}
		\right],$$
where $g_1$ and $g_2$ satisfy some smooth requirements. Since
$$L_g(X,Y;E,F)=\left[\begin{array}{c}
		 L_{g_1}(X,Y;E,F)  \\
		 L_{g_2}(X,Y;E,F)
		\end{array}
		\right],$$
a closed expression for $L_{g_i}(X,Y;E,F)$ ($i=1,2$) follows from (\ref{block-2}). The same relationship applies to the matrix arithmetic-geometric mean iteration \cite{Cardoso, Stickel} and to find expressions for the second order Fr\'echet derivatives \cite[Ch. X]{Bhatia} of primary matrix functions.

Closed formulae for the Fr\'echet derivatives of matrix exponential and matrix logarithm are available in the literature. One of the most known for the matrix exponential is the integral formula
\begin{equation}\label{frechet-exp}
L_{\exp}(A,E)=\int_0^1\, e^{A(1-t)}Ee^{At}\ dt
\end{equation}
(see \cite{VanLoan77} and \cite[Ch. 10]{Higham}). Another formula, involving the vectorization of the Fr\'echet derivative, is
\begin{equation}\label{frechet-vec-exp}
\vec\left(L_{\exp}(A,E)\right)=K_{\exp}(A)\,\vec(E),
\end{equation}
where $\vec(.)$ stands for the operator that stacks the columns of $E$ into a long vector of size $n^2\times 1$, and
$$K_{\exp}(A)=\left(I\otimes e^A\right)\,\psi\left(A\oplus(-A)\right)\in \mathbb{C}^{n^2\times n^2},$$
with $\psi(x)=(e^x-1)/x$. The symbols $\otimes$ and $\oplus$ denote the Kronecker product and the Kronecker sum, respectively. Other representations for $K_{\exp}(A)$ are available in \cite[Eq. (10.3)]{Higham}; see also \cite{Najfeld,Kenney}.

An integral representation of the Fr\'echet derivative of the matrix logarithm is
\begin{equation}\label{frechet-log}
L_{\log}(A,E)=\int_0^1\, \left(t(A-I)+I\right)^{-1}\,E\, \left(t(A-I)+I\right)^{-1}\ dt
\end{equation}
(see \cite{Dieci} and \cite[Ch. 11]{Higham}). Vectorizing (\ref{frechet-log}) yields
\begin{equation}\label{frechet-vec-log}
\vec\left(L_{\log}(A,E)\right)=K_{\log}(A)\,\vec(E),
\end{equation}
where
$$K_{\log}(A)=\int_0^1 \left(t(A-I)+I\right)^{-T}\otimes \left(t(A-I)+I\right)^{-1}\ dt \in \mathbb{C}^{n^2\times n^2}.$$
Gathering the formulae above, a vectorization of the Fr\'echet derivative of the matrix-matrix exponentiation $f(A,B)=A^B$ can be given by
\begin{equation}\label{frechet-vec-f}
\vec\left(L_{f}(A,B;E,F)\right)=K_{f}(A,B)\,\left[\begin{array}{c}
		 \vec(E) \\ \vec(F)
		\end{array}
		\right],
\end{equation}
where
\begin{equation}\label{kf}
K_f(A,B)=K_{\exp}\left(\log(A)\,B\right)\,\big[\left(B^T\otimes I\right)\,K_{\log}(A)\quad I\otimes \log(A)\big].
\end{equation}

Fr\'echet derivatives allow us to understand how the function $f(A,B)=A^B$ behaves when both $A$ and $B$ are subject to small perturbations. Suppose now that $A$ does not suffer any kind of perturbation but $B$ does. Now just $B$ is regarded as a variable and similar perturbed results to those of matrix exponential are valid, as shown below in Theorem \ref{perturb1}.

\begin{thm}\label{perturb1}
Assume that $A$ has no eigenvalue on $\mathbb{R}_0^-$. For any ${B}_{1}, {B}_{2}\in \mathbb{C}^{n\times n}$, the following relation holds:
\begin{equation}
\|{A}^{{B}_{1}}-{A}^{{B}_{2}}\|\leq\|{B}_{1}-{B}_{2}\|e^{\max\{\|\log({A}){B}_{1}\|,\|\log({A}){B}_{2}\|\}}.
\end{equation}
\end{thm}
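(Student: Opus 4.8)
The plan is to reduce the inequality to a standard Lipschitz-type estimate for the matrix exponential. Write $M_i = \log(A)B_i$ for $i=1,2$, so that $A^{B_i} = e^{M_i}$ and we must bound $\|e^{M_1} - e^{M_2}\|$. The natural tool is the integral identity
\begin{equation}\label{pp-id}
e^{M_1} - e^{M_2} = \int_0^1 \frac{d}{ds}\, e^{sM_1}e^{(1-s)M_2}\, ds = \int_0^1 e^{sM_1}(M_1-M_2)e^{(1-s)M_2}\, ds,
\end{equation}
which is verified by differentiating the integrand. Taking norms, using submultiplicativity and $\|e^{sM_i}\| \le e^{s\|M_i\|}$, gives $\|e^{M_1}-e^{M_2}\| \le \|M_1 - M_2\| \int_0^1 e^{s\|M_1\|}e^{(1-s)\|M_2\|}\, ds$, and the integral is trivially bounded by $e^{\max\{\|M_1\|,\|M_2\|\}}$.

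The remaining step is to convert $\|M_1 - M_2\| = \|\log(A)(B_1 - B_2)\|$ into something controlled by $\|B_1 - B_2\|$. Here submultiplicativity would only yield a factor $\|\log(A)\|$, which is \emph{not} what the statement claims; the claimed bound has no $\|\log(A)\|$ factor. So I expect the intended route is slightly different: keep the factor $(M_1 - M_2) = \log(A)(B_1-B_2)$ \emph{inside} the integral and instead bound $\|\log(A)\| \le \max\{\|M_1\|/\| \cdot\|,\dots\}$ — but that does not work cleanly either when $B_i$ is singular. More likely the authors implicitly intend a norm that is submultiplicative together with the observation that for the specific structure one can write, for each $s$, $e^{sM_1}\log(A) = \log(A)e^{sM_1'}$-type commutation only when $A$ and $B_i$ commute. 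I suspect the cleanest honest statement uses $\|\log(A)(B_1-B_2)\| \le \|B_1-B_2\|\,\|\log(A)\|$ and the displayed inequality should carry a $\|\log(A)\|$ factor, or else the exponent absorbs it; in any case, the main obstacle is precisely this: reconciling the $\log(A)(B_1-B_2)$ term with the $\|B_1-B_2\|$ on the right-hand side.

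Assuming the statement as written is what is wanted, the argument I would present is: apply \eqref{pp-id} with the substitution that the ``perturbation matrix'' is taken to be $\log(A)(B_1 - B_2)$, bound its norm by $\|B_1 - B_2\|$ \emph{after} noting (or assuming, as seems to be the paper's convention in analogous places) the normalization that makes this step valid, then estimate the exponential factors by $e^{s\|M_1\|}$ and $e^{(1-s)\|M_2\|}$, pull out $e^{\max\{\|M_1\|,\|M_2\|\}}$, and integrate $\int_0^1 1\,ds = 1$. This mirrors exactly the classical perturbation bound $\|e^{X}-e^{Y}\| \le \|X-Y\|\,e^{\max\{\|X\|,\|Y\|\}}$ for the matrix exponential, applied to $X = M_1$, $Y = M_2$; indeed the whole theorem is nothing more than that classical estimate specialized to $X = \log(A)B_1$, $Y = \log(A)B_2$, together with the (sub)multiplicative bound $\|\log(A)B_1 - \log(A)B_2\| \le \|B_1 - B_2\|\cdot(\text{something})$. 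So the proof should be short: cite the standard exponential perturbation inequality, set $X = \log(A)B_1$ and $Y = \log(A)B_2$, and finish. The hard part, as noted, is the handling of the $\log(A)$ factor — if the paper genuinely drops it, I would flag that the bound as stated requires either $\|\log(A)\|\le 1$ or an additional $\|\log(A)\|$ on the right.
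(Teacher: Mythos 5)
Your route is essentially the paper's own: the paper also starts from a Duhamel-type (variation-of-constants) identity, namely its equation (\ref{2-16}), $A^{(B+E)t}=A^{Bt}+\int_{0}^{t}A^{B(t-s)}\,E\,A^{(B+E)s}\,ds$, sets $B=B_{1}$, $B_{2}=B_{1}+E$, $t=1$, takes norms, and bounds the integrand by $e^{(1-s)\|\log(A)B_{1}\|}e^{s\|\log(A)B_{2}\|}\le e^{\max\{\|\log(A)B_{1}\|,\|\log(A)B_{2}\|\}}$, exactly as in your computation with $M_{i}=\log(A)B_{i}$. The difference is the point you flagged: the paper's identity (\ref{2-16}) carries the bare perturbation $E$ inside the integral, whereas the correct formula for $A^{(B+E)t}=e^{(\log(A)B+\log(A)E)t}$ must carry $\log(A)E$ there (differentiating the right-hand side of (\ref{2-16}) in $t$ shows it solves $X'=\log(A)B\,X+E\,X$, not $X'=\log(A)(B+E)\,X$). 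So the paper's proof tacitly discards precisely the $\log(A)$ factor that blocked you, and the honest conclusion of this line of argument is $\|A^{B_{1}}-A^{B_{2}}\|\le\|\log(A)(B_{1}-B_{2})\|\,e^{\max\{\|\log(A)B_{1}\|,\|\log(A)B_{2}\|\}}$, i.e.\ with an extra $\|\log(A)\|$ on the right in general.

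Your reluctance to force $\|\log(A)(B_{1}-B_{2})\|\le\|B_{1}-B_{2}\|$ is justified, because the statement as printed is false, so no completion of your proposal can reach it: already for $n=1$, $A=e^{10}$, $B_{1}=1$, $B_{2}=1+\epsilon$ one has $|A^{B_{1}}-A^{B_{2}}|=e^{10}\left(e^{10\epsilon}-1\right)\approx 10\,\epsilon\,e^{10}$, while the claimed bound is $\epsilon\,e^{10(1+\epsilon)}\approx\epsilon\,e^{10}$. (A secondary blemish in the paper's chain, replacing $\|X(B_{1}-B_{2})Z\|$ by $\|B_{1}-B_{2}\|\,\|XZ\|$ rather than $\|B_{1}-B_{2}\|\,\|X\|\,\|Z\|$, is harmless for the final estimate; the missing $\log(A)$ in (\ref{2-16}) is the real defect.) In short: your approach and the paper's coincide, your proof of the classical bound $\|e^{M_{1}}-e^{M_{2}}\|\le\|M_{1}-M_{2}\|\,e^{\max\{\|M_{1}\|,\|M_{2}\|\}}$ is correct, and the gap you identified is a genuine error in the theorem as stated, which should be repaired by inserting $\|\log(A)\|$ (or $\|\log(A)(B_{1}-B_{2})\|$) on the right-hand side.
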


\begin{proof}
From the theory of the matrix exponential, it is straightforward that
\begin{equation}\label{2-16}
{A}^{({B+E})t}={A}^{{B}t}+\int_{0}^{t}{A}^{{B}(t-s)}{E}{A}^{({B+E})s}ds
\end{equation}
(see \cite{Bellman}).
Let us consider ${B}={B}_{1}$, ${B}_{2}={B_{1}+E}$ and $t=1$ in (\ref{2-16}). Hence, we have
$${A}^{{B}_{2}}={A}^{{B}_{1}}+\int_{0}^{1}{A}^{{B}_{1}(1-s)}({B}_{1}-{B}_{2}){A}^{{B}_{2}s}ds.$$
Therefore, taking norms, one has
$$\begin{array}{rcl}
\|{A}^{{B}_{1}}-{A}^{{B}_{2}}\|&\leq& \left\|\int_{0}^{1}{A}^{{B}_{1}(1-s)}({B}_{1}-{B}_{2}){A}^{{B}_{2}s}ds\right\|\\\\
&\leq & \|{B}_{1}-{B}_{2}\|\int_{0}^{1}\left\|{A}^{{B}_{1}(1-s)}{A}^{{B}_{2}s}\right\|ds\\\\
&=& \|{B}_{1}-{B}_{2}\|\int_{0}^{1}\left\|e^{\log({A}){B}_{1}(1-s)}e^{\log({A}){B}_{2}s}\right\|ds\\\\
&\leq&\|{B}_{1}-{B}_{2}\|\int_{0}^{1}e^{\|\log({A}){B}_{1} \|(1-s)}e^{\|\log({A}){B}_{2}\|s}ds\\\\
&\leq&\|{B}_{1}-{B}_{2}\|\,e^{\max\{\|\log({A}){B}_{1}\|,\|\log({A}){B}_{2}\|\}}.
\end{array}
$$
\end{proof}

\section{Conditioning of the matrix-matrix exponentiation}\label{conditioning}

From now on, we will consider the Frobenius norm only. However, with appropriate modifications, some results can be adapted to other norms. The key factor for evaluating the condition number $k_f(A,B)$ is the norm of the operator $L_f(A,B)$. In this section, we first present an upper bound to such a norm and then a power method for its estimation. The notation $f(A,B)=A^B$ is used again.

\begin{thm}\label{thm-bound}
Assume that the conditions of Corollary \ref{cor1} are valid. With respect to the Frobenius norm, the following inequality holds:
\begin{equation}\label{bound}
\left\|L_f(A,B)\right\|_F\leq e^{\|\log(A)\|_F\|B\|_F}\sqrt{\left\|L_{\log}(A)\right\|^2_F\|B\|_F^2+\|\log(A)\|_F^2}.
\end{equation}
\end{thm}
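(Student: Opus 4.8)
The plan is to start from the explicit formula for the Fréchet derivative of the matrix-matrix exponentiation given in Corollary \ref{cor1}, namely
\[
L_f(A,B;E,F)=L_{\exp}\big(\log(A)\,B;\,\log(A)F+L_{\log}(A;E)\,B\big),
\]
and simply bound the operator norm by tracking how each ingredient responds to the direction $(E,F)$. First I would take Frobenius norms inside the argument of $L_{\exp}$: using submultiplicativity of $\|\cdot\|_F$ and the triangle inequality,
\[
\big\|\log(A)F+L_{\log}(A;E)\,B\big\|_F\le\|\log(A)\|_F\,\|F\|_F+\|L_{\log}(A)\|_F\,\|E\|_F\,\|B\|_F.
\]
Then I would apply a norm bound for $L_{\exp}$ itself. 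From the integral representation (\ref{frechet-exp}), $\|L_{\exp}(M;G)\|_F\le\int_0^1\|e^{M(1-t)}\|_F\|e^{Mt}\|_F\,dt\,\|G\|_F$, and a clean (if slightly lossy) estimate is $\|L_{\exp}(M)\|_F\le e^{\|M\|_F}$; here $M=\log(A)B$, so $\|M\|_F\le\|\log(A)\|_F\|B\|_F$, which produces the exponential prefactor in (\ref{bound}).

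Combining these gives, for every $(E,F)$,
\[
\|L_f(A,B;E,F)\|_F\le e^{\|\log(A)\|_F\|B\|_F}\Big(\|\log(A)\|_F\,\|F\|_F+\|L_{\log}(A)\|_F\,\|B\|_F\,\|E\|_F\Big).
\]
The remaining step is to maximize the right-hand side over $(E,F)$ subject to $\|(E,F)\|_F=1$, i.e.\ $\|E\|_F^2+\|F\|_F^2=1$. Writing $a=\|L_{\log}(A)\|_F\|B\|_F$ and $b=\|\log(A)\|_F$, we are maximizing $a\,\|E\|_F+b\,\|F\|_F$ over the unit circle in the $(\|E\|_F,\|F\|_F)$ quadrant, and by Cauchy--Schwarz this equals $\sqrt{a^2+b^2}=\sqrt{\|L_{\log}(A)\|_F^2\|B\|_F^2+\|\log(A)\|_F^2}$, which is exactly the square-root factor in (\ref{bound}). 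Dividing by $\|(E,F)\|_F$ and taking the supremum yields the claimed inequality.

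The only genuinely delicate point is justifying the bound $\|L_{\exp}(M)\|_F\le e^{\|M\|_F}$: one should check that $\|e^{M(1-t)}\|_F\,\|e^{Mt}\|_F\le e^{(1-t)\|M\|_F}e^{t\|M\|_F}=e^{\|M\|_F}$, which follows since the Frobenius norm is submultiplicative and $\|e^{X}\|_F\le e^{\|X\|_F}$ (from the power series $\|e^X\|_F\le\sum_k\|X\|_F^k/k!$); integrating the constant bound over $[0,1]$ then gives the factor. Everything else is routine triangle-inequality and Cauchy--Schwarz bookkeeping, so I expect no real obstacle beyond being careful that the two maximizations (over directions, and inside the Cauchy--Schwarz step) are set up consistently with the definition $\|(E,F)\|_F^2=\|E\|_F^2+\|F\|_F^2$.
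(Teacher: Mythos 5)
Your argument is essentially the paper's own proof: bound the inner argument $M=\log(A)F+L_{\log}(A;E)B$ by the triangle inequality and submultiplicativity, bound $L_{\exp}(\log(A)B;\cdot)$ through the integral representation (\ref{frechet-exp}) by the factor $e^{\|\log(A)B\|_F}\le e^{\|\log(A)\|_F\|B\|_F}$, and maximize $a\|E\|_F+b\|F\|_F$ over $\|E\|_F^2+\|F\|_F^2=1$ by Cauchy--Schwarz to get $\sqrt{a^2+b^2}$. One small caveat (shared with the paper's write-up): $\|e^X\|_F\le e^{\|X\|_F}$ is not literally true, since the $k=0$ term of the series gives $\|I\|_F=\sqrt{n}$ (take $X=0$), so the integrand should instead be bounded via $\|e^{M(1-t)}\,G\,e^{Mt}\|_F\le\|e^{M(1-t)}\|_2\,\|G\|_F\,\|e^{Mt}\|_2\le e^{\|M\|_2}\|G\|_F\le e^{\|M\|_F}\|G\|_F$, which repairs the step and leaves the stated bound (\ref{bound}) intact.
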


\begin{proof}
For $M:=\log(A)F+L_{\log}(A;E)\,B$, we have
\begin{eqnarray*}
\|M\|_F & \leq & \|\log(A)\|_F \|F\|_F+ \left\|L_{\log}(A)\right\|_F\|B\|_F\|E\|_F  \\
        & \leq & \big[\left\|L_{\log}(A)\right\|_F\|B\|_F\quad \|\log(A)\|_F\big]
    \left\|\left[\begin{array}{c}
		 E \\ F
		\end{array}
		\right]\right\|_F.
\end{eqnarray*}

By (\ref{explicit}),
\begin{eqnarray*}
\left\|L_f(A,B;E,F)\right\|_F & = &  \left\|L_{\exp}\left( \log(A)\,B;M\right)\right\|_F\\
            & = &  \left\|\int_0^1 e^{\log(A)\,B (1-t)}M  e^{\log(A)\,B t}\ dt\right\|_F\\
            & \leq & \|M\|_F \int_0^1  e^{\|\log(A)\,B\|_F}\ dt\\
            & \leq & \|M\|_F e^{\|\log(A)\,B\|_F}.
\end{eqnarray*}
Hence
\begin{eqnarray*}
\left\|L_f(A,B)\right\|_F &=& \max_{\|(E,F)\|_F=1}\left\|L_f(A,B;E,F)\right\|_F \\
                & \leq & e^{\|\log(A)\,B\|_F} \left\|\left[\left\|L_{\log}(A)\right\|_F\|B\|_F\quad \|\log(A)\|_F\right]\right\|_F \\
                & \leq & e^{\|\log(A)\|_F\|B\|_F}\sqrt{\left\|L_{\log}(A)\right\|^2_F\|B\|_F^2+\|\log(A)\|_F^2}.
\end{eqnarray*}
\end{proof}

If $\|A-I\|_F<1$, one can find an upper bound for the factor $\|\log(A)\|_F$ in the right hand side of (\ref{bound}) as follows:
$$\|\log({A})\|_F\leq \sum_{k=1}^{\infty}\frac{\|{A}-{I}\|_F^{k}}{k}\leq \|{A}-{I}\|_F\sum_{k=0}^{\infty}\|{A}-{I}\|_F^{k}\leq \frac{\|{A}-{I}\|_F}{1-\|{A}-{I}\|_F}. $$
In the general case, it is hard to bound $\|\log(A)\|_F$, which can be infinitely large. However, since the logarithm function increases in a very slow fashion, in practice the values attained by $\|\log(A)\|_F$ can be considered small. For instance, its largest value for the ten  matrices considered in the numerical experiments in Section \ref{experiments} is about $50$ (see bottom-left plot in Figure \ref{figura}).

For better estimates to $\left\|L_f(A,B)\right\|_F$, we propose below a particular power method for the matrix-matrix exponentiation using the framework of \cite[Alg. 3.20]{Higham}. Before stating the detailed steps of the methods, we shall address two important issues raised by its implementation. The first one is the computation of the Fr\'echet derivative $L_f(A,B;E,F)$ and the second one is how to find the adjoint operator $L^\star$ with respect to the Euclidean inner product $\langle X,Y\rangle=\trace(Y^\ast X)$. We recall that the matrix of our linear operator $L_f(A,B)$ is not square which means that its expression is not so simple to obtain as in the square case, where one just needs to take the conjugate transpose of the argument (check the top of p. 66 in \cite{Higham}).

 About the first issue, and attending to the developments carried out in Section \ref{frechet}, we will use formula (\ref{explicit}). For the computation of $L_{\exp}$ we consider \cite[Alg. 6.4]{Mohy09}, and for the computation of $\log$ and $L_{\log}$ we use \cite[Alg. 5.1]{Mohy13} (without the computation of $L_{\log}^{\star}$). We can, alternatively, use
  \begin{equation}\label{frechet1}
  L_f(A,B;E,F)=\left(e^{\log\left(\left[
		\begin{array}{cc}
		 A & E  \\
		 0 & A
		\end{array}
		\right]\right)\,
\left[\begin{array}{cc}
		 B & F  \\
		 0 & B
		\end{array}
		\right]}\right)_{1,2},
\end{equation}
(this should be read as: the Fr\'echet derivative is the block $(1,2)$ of the resulting matrix in the right-hand side; see (\ref{eq-mathias})), but this formula is more expensive than (\ref{explicit}), even if we exploit the particular structure of the two block matrices in the right-hand side of (\ref{frechet1}). More disadvantages of formulae like (\ref{frechet1}) are mentioned in \cite{Mohy12,Mohy13}.

Now we focus on finding a closed expression for the adjoint operator $L^\star_f(A,B)$, where $f(A,B)=A^B$. According to the theory of adjoint operators (see, for instance, \cite{Friedberg}), one needs to look for the unique operator
\begin{center}
$\begin{array}{rccl}
L_f^\star(A,B) :& \mathbb{C}^{n\times n}  & \longrightarrow & \mathbb{C}^{n\times n}\times \mathbb{C}^{n\times n}\\
            & W & \longmapsto &   L_f^\star(A,B;W),\\
\end{array}$
\end{center}
such that
$$\vec\left(L_f^\star(A,B;W)\right)=K^\ast_f(A,B)\,\vec(W),$$
where $K^\ast_f(A,B)$ is the conjugate transpose of (\ref{kf}). Since
$$\vec\big(L_{\exp}\left((\log(A)B)^{\ast};W\right)\big)=K_{\exp}\left((\log(A)B)^{\ast}\right)\,\vec(W)$$
and, for $Z:=L_{\exp}\left((\log(A)B)^{\ast};W\right)B^{\ast},$ it holds
$$K_{\log}(A^{\ast})\vec(Z)=\vec\left(L_{\log}\left(A^{\ast};Z\right)\right),$$
one has
\begin{eqnarray*}
\vec\(L_f^{\star}(A,B;W)\right) &=& K^{\ast}_f(A,B)\,\vec(W),\\
& = & \left[\begin{array}{c}
		 K_{\log}(A^{\ast})\,\left((B^{\ast})^T\otimes I\right)K_{\exp}\left((\log(A)B)^{\ast}\right)\vec(W)  \\
		\left(I\otimes\log(A^{\ast})\right)K_{\exp}\left((\log(A)B)^{\ast}\right)\vec(W)
		\end{array}
		\right],
\end{eqnarray*}
and, consequentely,
\begin{equation}\label{Ladjoint}
L_f^{\star}(A,B;W)= \left[\begin{array}{c}
L_{\log}\left(A^{{\ast}};\,L_{\exp}\left((\log(A)B)^{{\ast}};W\right)B^{{\ast}}\right)\\
\log(A^{\ast})L_{\exp}\left((\log(A)B)^{\ast};W\right)
\end{array}
\right].
\end{equation}

We are now ready to propose an algorithm to estimate the condition number $\kappa_f$ of the matrix-matrix exponentiation with respect to the Frobenius norm.

{\rm \begin{alg}\label{algorithm}
Given $A,B\in\mathbb{C}^{n\times n}$, with $A$ having no eigenvalues on the closed negative real axis, this algorithm estimates the condition number $\kappa_f(A,B)$ defined in (\ref{cond-number}), where $f(A,B)=A^B$, with respect to the Frobenius norm.
\begin{enumerate}
\item[] Choose nonzero starting matrices  $E_0,F_0\in\mathbb{C}^{n\times n}$ and a tolerance \texttt{tol};
\item[] Set $\gamma_0=0$, $\gamma_1=1$ and $k=0$;
\item[] \texttt{while} $\left|\gamma_{k+1}-\gamma_k\right| > \mathtt{tol}\,\gamma_{k+1}$
\begin{itemize}
\item[] $W_{k+1}=L_f(A,B;E_k,F_k)$, with $L_f$ given by (\ref{explicit});
\item[] $Z_{k+1}=L^\star_f\left(A,B;W_{k+1}\right)$, with $L_f^\star$ given by (\ref{Ladjoint});
\item[] $\gamma_{k+1}=\left\|Z_{k+1}\right\|_F/\left\|W_{k+1}\right\|_F$;
\item[] $E_{k+1}=Z_{k+1}(1:n,1:n)$; $F_{k+1}=Z_{k+1}(n+1:2n,1:n)$;
\item[] $k=k+1$;
\end{itemize}
\item[] \texttt{end}
\item[] $\|L_f(A,B)\|_F \approx \gamma_{k+1}$;
\item[] $\kappa_f(A,B)= \|(A,B)\|_F \,\|L_f(A,B)\|_F/\left\|A^B\right\|_F$.
\end{enumerate}
\end{alg}}

\medskip
{\bf Cost}: $(5\alpha_1+\alpha_2+2\alpha_3+2\alpha_4)k$, where $\alpha_1$ is the cost of computing one matrix-matrix product (about $2n^3$), $\alpha_2$ is the cost of computing $\log(A)$, $\alpha_3$ corresponds to the computation of $L_{\log}(A;Z)$ ($Z$ stands for a given complex matrix of order $n$), and $\alpha_4$ is the cost for $L_{\exp}\left(\log(A)B;Z\right)$. If $\log(A)$ and $L_{\log}(A;Z)$ are computed by \cite[Alg. 5.1]{Mohy13}, then $(\alpha_2+2\alpha_3)k$ is about
$\left(25+\left(19+\frac{13}{3}(s+m)\right)k\right)n^3$, where $s$ is the number of square roots needed in the inverse scaling and squaring procedure and $m$ is the order of Pad\'e approximants considered; assuming that $L_{\exp}$ is evaluated by \cite[Alg. 6.4]{Mohy09}, $2\alpha_4k$ is about $(4w_m+12s+32/3)kn^3$, where $w_m$ is a number given in \cite[Table 6.2]{Mohy09}, which is related with the order of Pad\'e approximants to the matrix exponential, and $s$ is the number of squarings.

\section{Numerical experiments}\label{experiments}

We have implemented Algorithm \ref{algorithm} in MATLAB, with unit roundoff $u\approx 1.1\times 10^{-16}$, with a set of ten pairs of matrices $(A_j,B_j),\ j=1,\ldots,10$, with sizes ranging from $10\times 10$ to $15\times 15$. Many pairs include matrices with nonreal entries and/or matrices from MATLAB's gallery (for instance, the matrices \texttt{lehmer}, \texttt{dramadah}, \texttt{hilb}, \texttt{cauchy} and \texttt{condex}).

The top-left plot displays the relative errors for the condition number $\kappa_f(A_j,B_j)$ estimated by Algorithm \ref{algorithm}, for each pair of matrices. As ``exact condition number'', we have considered the value given by our implementation of Algorithm 3.17 in \cite{Higham}. It is worth noticing that this latter algorithm requires $O(n^5)$ flops while Algorithm \ref{algorithm} involves $O(n^3)$ flops. Top-right graphic shows that just $2$ iterations in Algorithm \ref{algorithm} were needed to meet the prescribed tolerance $\mathtt{tol}=10^{-1}$. The bottom-left plot illustrates our claim after the proof of Theorem \ref{thm-bound} about the small norm of the matrix logarithm and, finally, the bottom-right plots the values of the relative condition number $\kappa_f(A_j,B_j)$, for each $j$.

\begin{center}
\begin{figure}
\hspace*{-1.9cm}\includegraphics[height=10cm]{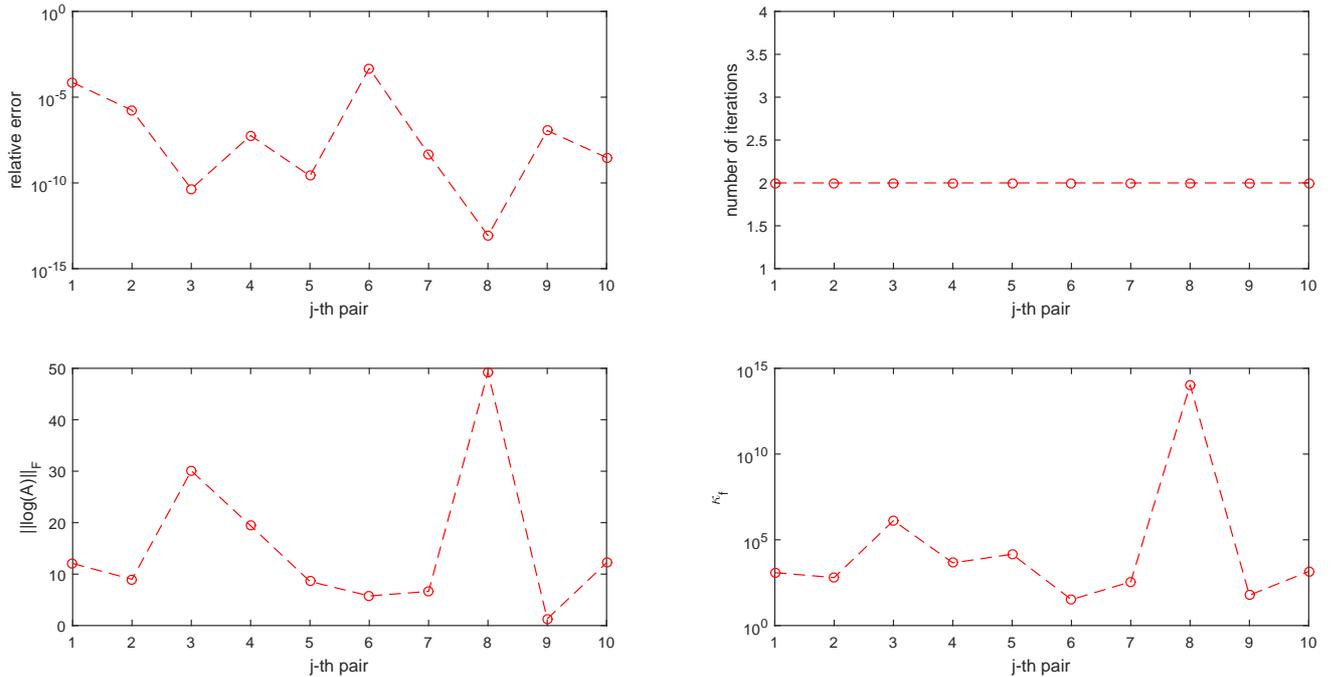}
\caption{\small Top left: Relative errors for the condition number $\kappa_f(A_j,B_j),\ j=1,\ldots,10$, estimated by Algorithm \ref{algorithm} for each pair of matrices; top right: number of iterations required by Algorithm \ref{algorithm} for a tolerance $\mathtt{tol}=10^{-1}$; bottom-left: displays $\|\log(A_j)\|_F$ for each $j$; bottom-right: relative condition number $\kappa_f(A_j,B_j)$.
 }
\label{figura}
\end{figure}
\end{center}

\section{Conclusions}\label{conclusions}
The Fr\'echet derivative of the matrix-matrix exponentiation and its conditioning have been investigated for the first time (as far as we know). We have given a general formula for the Fr\'echet derivative of certain bivariate matrix functions, with applications to well-know bivariate matrix functions, including the matrix-matrix exponentiation. An algorithm based on the power method for estimating the relative condition number has been proposed. Some numerical experiments illustrate our results. Basic results on the matrix-matrix exponentiation have been derived as well.
 \\\\


\end{document}